\newtheorem{theorem}{Theorem}   
\newtheorem{lemma}[theorem]{Lemma}
\newtheorem{problem}[]{Problem}
\newtheoremstyle{example}% <name>
{3pt}% <Space above>
{3pt}% <Space below>
{}% <Body font>
{}% <Indent amount>
{\upshape \bfseries}% <Theorem head font>
{.}% <Punctuation after theorem head>
{.5em}% <Space after theorem headi>
{}% <Theorem head spec (can be left empty, meaning `normal')>
\theoremstyle{example}
\newtheorem{example}{Example}
\newtheorem{remark}{Remark}
\newtheorem{observation}{Observation}
\newtheorem{dfn}{Definition}
\DeclareMathOperator{\col}{col}
\DeclareMathOperator{\diam}{diam}
\title{Identifiability of Undirected Dynamical Networks: a Graph-Theoretic Approach}
\author{Henk J. van Waarde, Pietro Tesi, and M. Kanat Camlibel % <-this % stops a space
\thanks{Pietro Tesi and Henk van Waarde are with the Engineering and Technology Institute Groningen, Faculty of Science and Engineering, University of Groningen, 9747 AG Groningen, The Netherlands. Pietro Tesi is also with the Department of Information Engineering, University of Florence, 50139 Florence, Italy. Kanat Camlibel and Henk van Waarde are with the Johann Bernoulli Institute for Mathematics and Computer Science, Faculty of Science and Engineering, University of Groningen, P.O. Box 800, 9700 AV Groningen, The Netherlands. Email: {\tt\small h.j.van.waarde@rug.nl, p.tesi@rug.nl, pietro.tesi@unifi.it, m.k.camlibel@rug.nl}.
}%
}
\begin{document}

\maketitle
\thispagestyle{empty}

%%%%%%%%%%%%%%%%%%%%%%%%%%%%%%%%%%%%%%%%%%%%%%%%%%%%%%%%%%%%%%%%%%%%%%%%%%%%%%%%%%
\begin{abstract}
This paper deals with identifiability of undirected dynamical networks with single-integrator node dynamics. We assume that the graph structure of such networks is known, and aim to find graph-theoretic conditions under which the state matrix of the network can be uniquely identified. As our main contribution, we present a graph coloring condition that ensures identifiability of the network's state matrix. Additionally, we show how the framework can be used to assess identifiability of dynamical networks with general, higher-order node dynamics. As an interesting corollary of our results, we find that excitation and measurement of all network nodes is not required. In fact, for many network structures, identification is possible with only small fractions of measured and excited nodes.
\end{abstract}

\begin{IEEEkeywords}
Network analysis and control, identification, linear systems.
\end{IEEEkeywords}

%%%%%%%%%%%%%%%%%%%%%%%%%%%%%%%%%%%%%%%%%%%%%%%%%%%%%%%%%%%%%%%%%%%%%%%%%%%%%%%%%%
\section{Introduction}
\IEEEPARstart{N}{etworks} of dynamical systems appear in multiple contexts, including power networks, sensor networks, and robotic networks (cf. Section 1 of \cite{Mesbahi2010}). It is natural to describe such networks by a graph, where nodes correspond with dynamical subsystems, and edges represent interaction between different systems. Often, the graph structure of dynamical networks is not directly available. For instance, in neuroscience, the interactions between brain areas are typically unknown \cite{Sosa2005}. Other examples of networks with unknown interconnection structure include genetic networks \cite{Julius2009} and wireless sensor networks \cite{Mao2007}.

Consequently, the problem of \emph{network reconstruction} is considered in the literature. Network reconstruction is quite a broad concept, and there exist multiple variants of this problem. For example, the goal in \cite{Shahrampour2015}, \cite{Materassi2012} is to reconstruct the Boolean structure of the network (i.e., the locations of the edges). Moreover, simultaneous identification of the graph structure \emph{and} the network weights has been considered in \cite{Hassan-Moghaddam2016}, \cite{vanWaarde2018}, \cite{Zhou2007}. Typically, the conditions under which the network structure is uniquely identifiable are rather strong, and it is often assumed that the states of all nodes in the network can be measured \cite{Materassi2012}, \cite{Hassan-Moghaddam2016}, \cite{vanWaarde2018}, \cite{Zhou2007}. In fact, it has been shown \cite{Pare2013} that measuring all network nodes is \emph{necessary} for network reconstruction of dynamical networks (described by a class of state-space systems).  

In this paper, we consider undirected dynamical networks described by state-space systems. In contrast to the above discussed papers, we assume that the graph structure is \emph{known}, but the state matrix of the network is \emph{unavailable}. Such a situation arises, for example, in electrical or power networks in which the locations of links are typically known, but link weights require identification. Our goal is to find graph-theoretic conditions under which the state matrix of the network can be uniquely identified. 

Graph-theoretic conditions have previously been used to assess other system-theoretic properties such as structural controllability \cite{Monshizadeh2014}, \cite{vanWaarde2017}, fault detection \cite{Rapisarda2015}, \cite{deRoo2015}, and parameter-independent stability \cite{Koerts2017}. Conditions based on the graph structure are often desirable since they avoid potential numerical issues associated with more traditional linear algebra tests. In addition, graph-theoretic conditions provide insight in the types of networks having certain system-theoretic properties, and can aid the selection of input/output nodes \cite{Pequito2016}.

The papers that are most closely related to the work in this paper are \cite{Nabavi2016} and \cite{Bazanella2017}. Nabavi \emph{et al.} \cite{Nabavi2016} consider weighted, undirected consensus networks with a single input. They assume that the graph structure is known, and aim to identify the weights in the network. A sensor placement algorithm is presented, which selects a set of sensor nodes on the basis of the graph structure. It is shown that this set of sensor nodes is sufficient to guarantee weight identifiability. Bazanella \emph{et al.} \cite{Bazanella2017} consider a network model where interactions between nodes are modeled by proper transfer functions (see also \cite{vandenHof2013}, \cite{Weerts2018}). Also in this paper, the graph structure is assumed to be known, and the goal is to find conditions under which the transfer functions can be identified. Under the assumption that all nodes are externally excited, necessary and sufficient graph-theoretic conditions are presented under which all transfer functions can be (generically) identified. 

Note that the above papers make explicit assumptions on the number of input or output nodes. Indeed, in \cite{Nabavi2016} there is a single input node, all nodes are input nodes in \cite{Bazanella2017}, and all nodes are measured in \cite{Weerts2018}. In contrast to these papers, the current paper considers graph-theoretic conditions for identifiability of dynamical networks where the sets of input and output nodes can be any two (known) subsets of the vertex set. Our main contribution consists of a graph coloring condition for identifiability of dynamical networks with single-integrator node dynamics. Specifically, we prove a relation between identifiability and so-called \emph{zero forcing sets} \cite{Hogben2010} (see also \cite{Monshizadeh2014}, \cite{vanWaarde2017}, \cite{Trefois2015}). As our second result, we show how our framework can be used to assess identifiability of dynamical networks with general, higher-order node dynamics. 

The organization of this paper is as follows. First, in Section \ref{sectionpreliminaries} we introduce the notation and preliminaries used throughout the paper. Subsequently, in Section \ref{sectionproblem} we state the problem. Section \ref{sectionmainresult} contains our main results, and Section \ref{sectionextension} treats an extension to higher-order dynamics. Finally, our conclusions are stated in Section \ref{sectionconclusion}.

%%%%%%%%%%%%%%%%%%%%%%%%%%%%%%%%%%%%%%%%%%%%%%%%%%%%%%%%%%%%%%%%%%%%%%%%%%%%%%%%%%
\section{Preliminaries}
\label{sectionpreliminaries}
We denote the sets of natural, real, and complex numbers by $\mathbb{N}$, $\mathbb{R}$, and $\mathbb{C}$, respectively. Moreover, the set of real $m \times n$ matrices is denoted by $\mathbb{R}^{m \times n}$ and the set of symmetric $n \times n$ matrices is given by $\mathbb{S}^n$. The \emph{transpose} of a matrix $A$ is denoted by $A^\top$. A \emph{principal submatrix} of $A \in \mathbb{R}^{n \times n}$ is a square submatrix of $A$ obtained by removing rows and columns from $A$ with the same indices. We denote the Kronecker product of two matrices $A$ and $B$ by $A \otimes B$. The $n \times n$ identity matrix is given by $I_n$. If the dimension of $I_n$ is clear from the context, we simply write $I$. For $x_1,x_2,\dots,x_n \in \mathbb{R}^q$, we use the notation $\col(x_1,x_2,\dots,x_n) \in \mathbb{R}^{qn}$ to denote the concatenation of the vectors $x_1,x_2,\dots,x_n$. Finally, the cardinality of a set $S$ is denoted by $|S|$.

\subsection{Graph theory}
\label{sectiongraphtheory}
All graphs considered in this paper are simple, that is, without self-loops and with at most one edge between any pair of vertices. Let $G = (V,E)$ be an undirected graph, where $V = \{1,2,\dots,n\}$ is the set of \emph{vertices} (or nodes), and $E \subseteq V \times V$ denotes the set of \emph{edges}. A node $j \in V$ is said to be a \emph{neighbour} of $i \in V$ if $(i,j) \in E$. An \emph{induced subgraph} $G_S = (V_S,E_S)$ of $G$ is a graph with the properties that $V_S \subseteq V$, $E_S \subseteq E$ and for each $i,j \in V_S$ we have $(i,j) \in E_S$ if and only if $(i,j) \in E$. For any subset of nodes $V' = \{v_1,v_2,\dots,v_r\} \subseteq V$ we define the $n \times r$ matrix $P(V;V')$ as 
$P_{ij} := 1$ if $i = v_j$ and $P_{ij} := 0$ otherwise, where $P_{ij}$ denotes the $(i,j)$-th entry of $P$. We will now define two families of matrices associated with the graph $G$. Firstly, we define the \emph{qualitative class} $\mathcal{Q}(G)$ as \cite{Hogben2010}
\begin{equation*}
\mathcal{Q}(G) := \{X \in \mathbb{S}^n \mid \text{for } i \neq j, \: X_{ji} \neq 0 \iff (i,j) \in E \}.
\end{equation*}
The off-diagonal entries of matrices in $\mathcal{Q}(G)$ carry the graph structure of $G$ in the sense that $X_{ji}$ is nonzero if and only if there exists an edge $(i,j)$ in the graph $G$. Note that the diagonal elements of matrices in $\mathcal{Q}(G)$ are free, and hence, both Laplacian and adjacency matrices associated with $G$ are contained in $\mathcal{Q}(G)$ (see \cite{Monshizadeh2014}). In this paper, we focus on a subclass of $\mathcal{Q}(G)$, namely the class of matrices with \emph{non-negative} off-diagonal entries. This class is denoted by $\mathcal{Q}_p(G)$, and defined as
\begin{equation*}
\mathcal{Q}_p(G) := \{X \in \mathcal{Q}(G) \mid \text{for } i \neq j, \: X_{ji} \neq 0 \implies X_{ji} > 0 \}.
\end{equation*}
Note that (weighted) adjacency and negated Laplacian matrices are members of the class $\mathcal{Q}_p(G)$.

\begin{remark}
In this paper, we focus on undirected \emph{loopless} graphs $G$, and on the associated class of matrices $\mathcal{Q}_p(G)$. One could also define a class of matrices $\mathcal{Q}^l_p(G_l)$ for a graph $G_l$ \emph{with} self-loops, where a diagonal entry of a matrix in $\mathcal{Q}^l_p(G_l)$ is nonzero if and only if there is a self-loop on the corresponding node in $G_l$ (see, e.g., \cite{Hogben2010}). However, since diagonal entries of matrices in $\mathcal{Q}_p(G)$ are completely \emph{free}, we obtain $\mathcal{Q}^l_p(G_l) \subseteq \mathcal{Q}_p(G)$, where $G$ is the graph obtained from $G_l$ by removing its self-loops. As a consequence, all results in this paper are also valid for graphs with self-loops.
\end{remark}

\subsection{Zero forcing sets}
\label{sectionzeroforcing}
In this section we review the notion of zero forcing. Let $G = (V,E)$ be an undirected graph with vertices colored either black or white. The \emph{color-change rule} is defined in the following way. If $u \in V$ is a black vertex and exactly one neighbour $v \in V$ of $u$ is white, then change the color of $v$ to black \cite{Hogben2010}. When the color-change rule is applied to $u$ to change the color of $v$, we say $u$ \emph{forces} $v$, and write $u \to v$.
Given a coloring of $G$, that is, given a set $Z \subseteq V$ containing black vertices only, and a set $V \setminus Z$ consisting of only white vertices, the \emph{derived set} $D(Z)$ is the set of black vertices obtained by applying the color-change rule until no more changes are possible \cite{Hogben2010}. A \emph{zero forcing set} for $G$ is a subset of vertices $Z \subseteq V$ such that if initially the vertices in $Z$ are colored black and the remaining vertices are colored white, then $D(Z) = V$. Finally, a zero forcing set $Z \subseteq V$ is called a \emph{minimum zero forcing set} if for any zero forcing set $Y$ in $G$ we have $|Y| \geq |Z|$. 

\subsection{Dynamical networks}
Consider an undirected graph $G = (V,E)$. Let $V_I \subseteq V$ be the set of so-called \emph{input nodes}, and let $V_O \subseteq V$ be the set of \emph{output nodes}, with cardinalities $|V_I| = m$ and $|V_O| = p$, respectively. Associated with $G$, $V_I$, and $V_O$, we consider the dynamical system 
\begin{subequations}
\label{system}
\begin{align}
\dot{x}(t) &= X x(t) + M u(t) \label{system:a}\\
y(t) &= N x(t), \label{system:b}
\end{align}
\end{subequations}
where $x \in \mathbb{R}^n$ is the state, $u \in \mathbb{R}^m$ is the input, and $y \in \mathbb{R}^p$ is the output. Furthermore, $X \in \mathcal{Q}_p(G)$ and the matrices $M$ and $N$ are indexed by $V_I$ and $V_O$, respectively, in the sense that
\begin{equation}
\label{definitionMN}
M = P(V;V_I), \text{ and } N = P^\top(V;V_O).
\end{equation}
We use the shorthand notation $(X,M,N)$ to denote the dynamical system \eqref{system}. The transfer matrix of \eqref{system} is given by $T(s) := N(sI - X)^{-1} M$, where $s \in \mathbb{C}$ is a complex variable.
\begin{remark}
	Note that in this paper, we focus on dynamical networks \eqref{system}, where the state matrix $X$ is contained in $\mathcal{Q}_p(G)$. This implies that $X$ is symmetric and the off-diagonal elements of $X$ are non-negative. Dynamical networks of this form appear, for example, in consensus problems \cite{Saber2007}, and in the study of resistive-capacitive electrical networks (cf. Section V-B of \cite{Dorfler2018}). In addition, as we will see, the constraints on the matrix $X$ are also attractive from identification point of view in the sense that we can often identify $X$ with relatively small sets of input and output nodes. This is in contrast to the case of identifiability of matrices that do not satisfy symmetry and/or sign constraints. This is explained in more detail in Remark \ref{remarkdirected}.
\end{remark}

\subsection{Network identifiability}

In this section, we define the notion of network identifiability. It is well-known that the transfer matrix from $u$ to $y$ of system \eqref{system} can be identified from measurements of $u(t)$ and $y(t)$ if the input function $u$ is sufficiently rich \cite{Ljung1999}. Then, the question is whether we can uniquely reconstruct the state matrix $X$ from the transfer matrix $T(s)$. Specifically, since we assume that the matrix $X$ is \emph{unknown}, we are interested in conditions under which $X$ can be reconstructed from $T(s)$ \emph{for all} matrices $X \in \mathcal{Q}_p(G)$. This is known as \emph{global identifiability} (see, e.g., \cite{Grewal1976}). To be precise, we have the following definition.   

\begin{dfn}
	\label{definitionid}
	Consider an undirected graph $G = (V,E)$ with input nodes $V_I \subseteq V$ and output nodes $V_O \subseteq V$. Define $M$ and $N$ as in \eqref{definitionMN}. We say $(G;V_I;V_O)$ is \emph{identifiable} if for all matrices $X, \bar{X} \in \mathcal{Q}_p(G)$ the following implication holds:
	\begin{equation}
	\label{implication}
	N(sI-X)^{-1}M = N(sI-\bar{X})^{-1}M \implies X = \bar{X}.
	\end{equation}
\end{dfn}
Note that identifiability of $(G;V_I;V_O)$ is a property of the graph and the input/output nodes only, and not of the particular state matrix $X \in \mathcal{Q}_p(G)$.
\begin{observation}
The implication \eqref{implication} that appears in Definition \ref{definitionid} can be equivalently stated as
\begin{equation*}
N X^k M = N \bar{X}^k M \text{ for all } k \in \mathbb{N} \implies X = \bar{X}.
\end{equation*}
The matrices $N X^k M$ for $k \in \mathbb{N}$ are often referred to as the \emph{Markov parameters} of $(X,M,N)$.
\end{observation}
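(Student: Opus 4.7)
The plan is to reduce the observation to the classical identity relating Markov parameters and transfer matrices: namely, for any $X, \bar{X} \in \mathbb{R}^{n \times n}$ and any $M, N$ of compatible sizes,
\[
N(sI - X)^{-1} M = N(sI - \bar{X})^{-1} M \iff N X^k M = N \bar{X}^k M \text{ for all } k \in \mathbb{N}.
\]
Once this equivalence of hypotheses is established, the two implications in Definition~\ref{definitionid} and in the observation share the same conclusion ($X = \bar{X}$), so they are themselves equivalent.

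The main tool is the Neumann expansion of the resolvent. First I would observe that, for $|s| > \|X\|$,
\[
(sI - X)^{-1} = s^{-1}(I - s^{-1}X)^{-1} = \sum_{k=0}^{\infty} X^k s^{-(k+1)},
\]
so that $N(sI - X)^{-1}M = \sum_{k=0}^{\infty}(NX^kM)\, s^{-(k+1)}$, and analogously with $\bar{X}$ in place of $X$. Both expansions converge on the common region $|s| > \max(\|X\|, \|\bar{X}\|)$.

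For the direction from Markov parameters to transfer matrices, if $NX^k M = N\bar{X}^k M$ for every $k \in \mathbb{N}$, then the two Laurent series coincide term by term on this common region, and hence the two proper rational matrix functions are equal on a nonempty open set; since a rational function is determined by its values on any infinite subset of $\mathbb{C}$, they are equal as rational functions. Conversely, equal rational functions admit identical Laurent expansions at infinity, and matching the coefficient of $s^{-(k+1)}$ yields $NX^k M = N\bar{X}^k M$ for each $k \in \mathbb{N}$.

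There is no substantive obstacle here: the point is simply that the Markov parameters are precisely the Laurent coefficients of the transfer matrix at $s = \infty$, so the two pieces of data encode the same information about $(X,M,N)$. The only subtlety worth flagging is the identification of ``equality of transfer matrices'' with ``equality as rational functions,'' which is exactly what justifies passing between the series expansion and the closed-form resolvent on either side of the equivalence.
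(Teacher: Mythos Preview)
Your argument is correct and is precisely the standard justification: the Markov parameters $NX^kM$ are the coefficients of the Laurent expansion of $N(sI-X)^{-1}M$ at infinity, so equality of transfer matrices as rational functions is equivalent to equality of all Markov parameters, and the two implications therefore share the same hypothesis. The paper itself does not prove this observation; it states it as a well-known fact and moves on, so there is no alternative approach to compare against. Your write-up supplies exactly the missing details a careful reader would want.
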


In addition to identifiability of $(G;V_I;V_O)$, we are interested in a more general property, namely identifiability of an \emph{induced subgraph} of $G$. This is defined as follows. 

\begin{dfn}
	\label{definitionid2}
	Consider an undirected graph $G = (V,E)$ with input nodes $V_I \subseteq V$ and output nodes $V_O \subseteq V$, and let $G_S$ be an induced subgraph of $G$. Define $M$ and $N$ as in \eqref{definitionMN}. We say $(G_S;V_I;V_O)$ is \emph{identifiable} if for all matrices $X, \bar{X} \in \mathcal{Q}_p(G)$ the following implication holds:
	\begin{equation*}
	N(sI-X)^{-1}M = N(sI-\bar{X})^{-1}M \implies X_S = \bar{X}_S,
	\end{equation*}
	where $X_{S},\bar{X}_{S} \in \mathcal{Q}_p(G_S)$ are the principal submatrices of $X$ and $\bar{X}$ corresponding to the nodes in $G_S$.
\end{dfn}

Note that identifiability of $(G;V_I;V_O)$ is a special case of identifiability of $(G_S;V_I;V_O)$, where the subgraph $G_S$ is simply equal to $G$. 

\section{Problem statement}
\label{sectionproblem}
Let $G = (V,E)$ be an undirected graph with input nodes $V_I \subseteq V$ and output nodes $V_O \subseteq V$, and consider the associated dynamical system \eqref{system}. Throughout this paper, we assume $G$, $V_I$, and $V_O$ to be \emph{known}. We want to investigate which principal submatrices of $X$ can be identified from input/output data (for all $X \in \mathcal{Q}_p(G)$). In other words, we want to find out for which induced subgraphs $G_S$ of $G$, the triple $(G_S;V_I;V_O)$ is identifiable. In particular, we are interested in conditions under which $(G;V_I;V_O)$ is identifiable.

Note that it is not straightforward to check the condition for identifiability in Definitions \ref{definitionid} and \ref{definitionid2}. Indeed, these definitions requires the computation and comparison of an infinite number of transfer matrices (for all $X,\bar{X}\in \mathcal{Q}_p(G)$). Instead, in this paper we want to establish a condition for identifiability of $(G_S;V_I;V_O)$ in terms of zero forcing sets. Such a graph-based condition has the potential of being more efficient to check than the condition of Definition \ref{definitionid2}. In addition, graph-theoretic conditions have the advantage of avoiding possible numerical errors in the linear algebra computations appearing in Definition \ref{definitionid2}. Explicitly, the considered problem in this paper is as follows.
\begin{problem}
	Consider an undirected graph $G = (V,E)$ with input nodes $V_I \subseteq V$ and ouput nodes $V_O \subseteq V$, and let $G_S$ be an induced subgraph of $G$. Provide graph-theoretic conditions under which $(G_S;V_I;V_O)$ is identifiable.
\end{problem}

\section{Main results}
\label{sectionmainresult}
In this section, we state our main results. First, we establish a lemma which will be used to prove our main contributions (Theorems \ref{mainresult} and \ref{mainresult2}). The following lemma considers the case that $V_I = V_O$, and asserts that identifiability of a triple $(G_S;U;U)$ is invariant under the color-change rule.
\begin{lemma}
\label{lemmainvariant}
Let $G_S$ be an induced subgraph of the undirected graph $G = (V,E)$, and let $U \subseteq V$. Suppose that $u \to v$, where $u \in U$ and $v \in V \setminus U$. Then $(G_S;U;U)$ is identifiable if and only if $(G_S;U \cup \{v\};U \cup \{v\})$ is identifiable.
\end{lemma}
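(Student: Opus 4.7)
The proof plan is to split the biconditional into the two implications. The ``only if'' direction is immediate: if $P(V;U\cup\{v\})^\top X^k P(V;U\cup\{v\}) = P(V;U\cup\{v\})^\top \bar{X}^k P(V;U\cup\{v\})$ for all $k$, then the smaller sub-block indexed by $U \subseteq U\cup\{v\}$ also coincides for $X$ and $\bar{X}$, and identifiability of $(G_S;U;U)$ directly yields $X_S = \bar{X}_S$. All the work lies in the converse.

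For the ``if'' direction, I would fix $X,\bar{X} \in \mathcal{Q}_p(G)$ such that $(X^k)_{ij} = (\bar{X}^k)_{ij}$ for all $i,j \in U$ and every $k \in \mathbb{N}$, and aim to show that these equalities persist over $i,j \in U\cup\{v\}$; the assumed identifiability of $(G_S;U\cup\{v\};U\cup\{v\})$ then yields $X_S = \bar{X}_S$. Since $u$ forces $v$, every neighbor of $u$ other than $v$ lies in $U$; denote $W := N(u)\cap U$, so $N(u) = W\cup\{v\}$. Using $X = X^\top$, expanding gives
\[
(X^2)_{uu} \;=\; X_{uu}^2 \;+\; \sum_{w\in W} X_{uw}^2 \;+\; X_{uv}^2,
\]
and every term except $X_{uv}^2$ involves only $U$-indices of $X$ or $X^2$ and is therefore identical for $X$ and $\bar{X}$. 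Hence $X_{uv}^2 = \bar{X}_{uv}^2$. Because $(u,v) \in E$, the definition of $\mathcal{Q}_p(G)$ forces $X_{uv}, \bar{X}_{uv} > 0$, so $X_{uv} = \bar{X}_{uv}$.

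With this single entry pinned down, the remaining $(U\cup\{v\})$-indexed Markov parameters follow algebraically. For each $i \in U$ and $k \in \mathbb{N}$, the identity
\[
(X^{k+1})_{ui} \;=\; X_{uu}(X^k)_{ui} \;+\; \sum_{w\in W} X_{uw}(X^k)_{wi} \;+\; X_{uv}(X^k)_{vi}
\]
can be solved for $(X^k)_{vi}$, since $X_{uv} \neq 0$ and every other quantity on the right is a $U$-indexed Markov-parameter entry. Hence $(X^k)_{vi} = (\bar{X}^k)_{vi}$, and symmetry of $X^k$ and $\bar{X}^k$ handles $(X^k)_{iv}$. Applying the same identity at $i=v$ and solving for $(X^k)_{vv}$ (all other terms now being known) gives $(X^k)_{vv} = (\bar{X}^k)_{vv}$. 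Therefore all Markov parameters over $U\cup\{v\}$ coincide, and identifiability of $(G_S;U\cup\{v\};U\cup\{v\})$ completes the argument.

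The main obstacle is the step $X_{uv}^2 = \bar{X}_{uv}^2 \Longrightarrow X_{uv} = \bar{X}_{uv}$: it is precisely where the non-negativity built into $\mathcal{Q}_p(G)$ is essential. Without the sign constraint, the $U$-Markov parameters would leave $X_{uv}$ determined only up to sign, the entry $X_{uv}$ used as a ``pivot'' in the propagation step could differ from $\bar{X}_{uv}$, and the subsequent recoveries of $(X^k)_{vi}$ and $(X^k)_{vv}$ would break down. Everything else is bookkeeping around the color-change rule, which is engineered exactly so that $u$ has only one unknown neighbor, turning the Markov-parameter identity into a scalar equation in the single unknown $(X^k)_{vi}$.
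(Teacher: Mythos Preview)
Your proof is correct and follows essentially the same route as the paper: recover $X_{uv}$ from $(X^2)_{uu}$ using the positivity in $\mathcal{Q}_p(G)$, then propagate via the expansion of $(X^{k+1})_{ui}$ along $N(u)=W\cup\{v\}$ to obtain the mixed entries $(X^k)_{vi}$. Your recovery of $(X^k)_{vv}$ by taking $i=v$ in that same identity (and using the already-established $(X^{k+1})_{uv}$) is in fact slightly cleaner than the paper's version, which instead expands $(X^{k+2})_{uu}$ as a double sum over $V_u\times V_u$ to isolate $X_{uv}^2(X^k)_{vv}$.
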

\begin{proof}
The `only if' part of the statement follows directly from the fact that identifiability is preserved under the addition of input and output nodes. Therefore, in what follows, we focus on proving the `if' part. Suppose that $(G_S;U \cup \{v\};U \cup \{v\})$ is identifiable. Let $\bar{M} := P(V;U \cup \{v\})$ denote the associated input matrix, and let $\bar{N} := \bar{M}^\top$ be the output matrix. In addition, let $M := P(V;U)$ and $N := M^\top$. The idea of this proof is as follows. For any $X \in \mathcal{Q}_p(G)$, we will show that the Markov parameters $\bar{N}X^k\bar{M}$ for $k \in \mathbb{N}$ can be obtained from the Markov parameters
\begin{equation}
\label{MarkovNXM}
NX^k M \text{ for } k \in \mathbb{N}.
\end{equation}
Then, we will show that this implies that $(G_S;U;U)$ is identifiable. In particular, due to the overlap in the Markov parameters of $(\bar{N},X,\bar{M})$ and $(N,X,M)$, we only need to show that $(X^k)_{vw} = (X^k)_{wv}$ and $(X^k)_{vv}$ can be obtained from \eqref{MarkovNXM} for all $k \in \mathbb{N}$ and all $w \in U$. We start by showing that $X_{uv}$ can be obtained from \eqref{MarkovNXM}. To this end, we define $V_u := \{u\} \cup \{j \in V \mid (u,j) \in E\}$ and compute
\begin{equation*}
\begin{aligned}
(X^2)_{uu}  &= \sum_{z \in V_u} X_{uz} X_{zu} \\
			&= X_{uv}^2 + \sum_{z \in V_u \setminus \{v\}} X_{uz} X_{zu}.
\end{aligned}
\end{equation*}
By hypothesis, $u \to v$ and hence $V_u \setminus \{v\} \subseteq U$. This implies that $X_{uv}^2 = (X^2)_{uu} - \sum_{z \in V_u \setminus \{v\}} X_{uz} X_{zu}$ can be obtained from the Markov parameters \eqref{MarkovNXM}. As $X \in \mathcal{Q}_p(G)$, we have $X_{uv} > 0$ and therefore also $X_{uv}$ can be obtained from \eqref{MarkovNXM}.

Next, we prove that $(X^k)_{vw}$ can be obtained from \eqref{MarkovNXM} for any $k \in \mathbb{N}$ and any $w \in U$. To this end, we write
\begin{equation*}
\begin{aligned}
(X^{k+1})_{uw} = X_{uv} (X^k)_{vw} + \sum_{z \in V_u \setminus \{v\}} X_{uz} (X^k)_{zw}.
\end{aligned}
\end{equation*}
Since $V_u \setminus \{v\} \subseteq U$, and $X_{uv}$ can be obtained from \eqref{MarkovNXM}, this shows that we can find $(X^k)_{vw}$ from the Markov parameters \eqref{MarkovNXM} using the formula 
\begin{equation*}
(X^k)_{vw} = \frac{1}{X_{uv}} \left( (X^{k+1})_{uw} - \sum_{z \in V_u \setminus \{v\}} X_{uz} (X^k)_{zw} \right).
\end{equation*}
Finally, we have to show that $(X^k)_{vv}$ can be obtained from \eqref{MarkovNXM} for any $k \in \mathbb{N}$. To do so, we compute
\begin{equation*}
\begin{aligned}
(X^{k+2})_{uu}  &= \sum_{i,j \in V_u} X_{ui} (X^k)_{ij} X_{ju} \\
				&= X^2_{uv} (X^k)_{vv} + \sum_{\substack{i,j \in V_u\\\{i,j\} \neq \{v\}}} X_{ui} (X^k)_{ij} X_{ju}.
\end{aligned}
\end{equation*}
Note that $(X^{k+2})_{uu}$ appears as an entry of one of the Markov parameters \eqref{MarkovNXM}. Furthermore, we have already established that $X_{uv}$ can be obtained from \eqref{MarkovNXM}. If $i = v$, then $X_{ui} = X_{uv}$, and we obtain $X_{ui}$ from \eqref{MarkovNXM}. Otherwise, $i \in V_u \setminus \{v\}$, and $X_{ui}$ already appears as an entry of one of the Markov parameters \eqref{MarkovNXM}. We can repeat the exact same argument for $X_{ju}$, to show that it can be obtained from \eqref{MarkovNXM}. Finally, consider the term $(X^k)_{ij}$ for $i$ and $j$ not both equal to $v$. If $i,j \in V_u \setminus \{v\}$, then $i,j \in U$ and $(X^k)_{ij}$ appears directly as an entry of a Markov parameter in \eqref{MarkovNXM}. Next, if $i = v$, then $j \in U$ and we have already proven that $(X^k)_{vj}$ can be obtained from \eqref{MarkovNXM}. By symmetry, this also holds for the entry $(X^k)_{iv}$, where $i \in U$. This shows that $(X^k)_{vv}$ can be found using the Markov parameters \eqref{MarkovNXM} via the fomula 
\begin{equation*}
(X^k)_{vv} = \frac{1}{X^2_{uv}} \left( (X^{k+2})_{uu} - \sum_{\substack{i,j \in V_u\\\{i,j\} \neq \{v\}}} X_{ui} (X^k)_{ij} X_{ju} \right). 
\end{equation*}
Now, by hypothesis, for any $X,\bar{X}\in \mathcal{Q}_p(G)$ the following implication holds:
\begin{equation}
\label{MarkovbarNXM}
\bar{N}X^k \bar{M} = \bar{N}\bar{X}^k \bar{M} \text{ for all } k \in \mathbb{N} \implies X_S = \bar{X}_S,
\end{equation}
where $X_S$ and $\bar{X}_S$ are the principal submatrices of respectively $X$ and $\bar{X}$ corresponding to the nodes in $G_S$. Suppose that $NX^k M = N \bar{X}^k M$ for all $k \in \mathbb{N}$. By the above formulae for $(X^k)_{vv}$ and $(X^k)_{vw}$ (and for $(\bar{X}^k)_{vv}$, $(\bar{X}^k)_{vw}$), we conclude that $\bar{N}X^k \bar{M} = \bar{N}\bar{X}^k \bar{M}$ for all $k \in \mathbb{N}$, and consequently $X_S = \bar{X}_S$ by \eqref{MarkovbarNXM}. Therefore, $(G_S;U;U)$ is identifiable.
\end{proof}

Based on the previous lemma, we state the following theorem, which is one of the main results of this paper. Loosely speaking, it states that we can identify the principal submatrix of $X$ corresponding to the \emph{derived set} (cf. Section \ref{sectionzeroforcing}) of $V_I \cap V_O$.
\begin{theorem}
\label{mainresult}
Let $G_S = (V_S,E_S)$ be an induced subgraph of the undirected graph $G = (V,E)$, and let $V_I, V_O \subseteq V$. Define $W := V_I \cap V_O$ and let $D(W)$ be the derived set of $W$ in $G$. If $V_S \subseteq D(W)$ then $(G_S;V_I;V_O)$ is identifiable.
\end{theorem}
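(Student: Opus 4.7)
The plan is to reduce the statement to three successive observations, leveraging Lemma \ref{lemmainvariant} as the main engine. First, I would reduce to the symmetric case $V_I = V_O = W$. The Markov parameters $NX^kM$ of $(G;V_I;V_O)$ consist of the entries $(X^k)_{ij}$ for $i \in V_O$ and $j \in V_I$, which in particular contain all entries with $i,j \in W$. These last entries are precisely the Markov parameters of $(G_S;W;W)$. Hence any data that identifies $X_S$ from the $W$-to-$W$ Markov parameters identifies it \emph{a fortiori} from the $(V_I,V_O)$-Markov parameters, so it suffices to prove that $(G_S;W;W)$ is identifiable.

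Second, I would run the color-change process that generates $D(W)$ from $W$ and apply Lemma \ref{lemmainvariant} along the way. Let $u_1 \to v_1,\, u_2 \to v_2,\, \ldots,\, u_k \to v_k$ be a sequence of forces that produces $D(W)$, and set $U_0 := W$ and $U_i := U_{i-1} \cup \{v_i\}$, so $U_k = D(W)$. At each step $u_i \in U_{i-1}$ and $v_i \in V \setminus U_{i-1}$, with $u_i \to v_i$ in the sense of the color-change rule on $G$. Lemma \ref{lemmainvariant} then gives
\begin{equation*}
(G_S;U_{i-1};U_{i-1})\text{ identifiable} \iff (G_S;U_i;U_i)\text{ identifiable}.
\end{equation*}
Chaining these equivalences for $i = 1,\ldots,k$ yields that $(G_S;W;W)$ is identifiable if and only if $(G_S;D(W);D(W))$ is identifiable.

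Third, I would verify the base case $(G_S;D(W);D(W))$ directly. With $\tilde M := P(V;D(W))$ and $\tilde N := \tilde M^\top$, the single Markov parameter $\tilde N X \tilde M$ equals the principal submatrix of $X$ indexed by $D(W)$. Since $V_S \subseteq D(W)$, the principal submatrix $X_S$ is itself a submatrix of $\tilde N X \tilde M$, and is therefore uniquely determined by it. So $\tilde N X \tilde M = \tilde N \bar X \tilde M$ forces $X_S = \bar X_S$, establishing identifiability. Combining the three reductions delivers the theorem.

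Essentially all of the technical content sits in Lemma \ref{lemmainvariant}; the only thing to be careful about is checking that the reduction from $(G_S;V_I;V_O)$ to $(G_S;W;W)$ indeed preserves (implies) identifiability, and that the color-change sequence defining $D(W)$ exactly matches the hypothesis of the lemma at every step. Neither is difficult, so I do not anticipate a real obstacle once the lemma is in hand.
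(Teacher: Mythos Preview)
Your proposal is correct and follows essentially the same route as the paper: reduce from $(G_S;V_I;V_O)$ to $(G_S;W;W)$ via the monotonicity of identifiability under adding input/output nodes, then iterate Lemma \ref{lemmainvariant} along a forcing chain to reach $(G_S;D(W);D(W))$, which is trivially identifiable since $V_S \subseteq D(W)$. The only cosmetic difference is that the paper applies Lemma \ref{lemmainvariant} with the intermediate subgraph $G_W$ on vertex set $D(W)$ and passes to $G_S$ afterward, whereas you work with $G_S$ directly; both are valid instantiations of the lemma.
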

\noindent
\begin{proof}
Let $G_W$ denote the induced subgraph of $G$ with vertex set $D(W)$. Note that $(G_W;D(W);D(W))$ is trivially identifiable. By consecutive application of Lemma \ref{lemmainvariant}, we find that $(G_W;W;W)$ is identifiable. By hypothesis, $G_S$ is a subgraph of $G_W$ and hence $(G_S;W;W)$ is identifiable. Finally, note that $W \subseteq V_I$ and $W \subseteq V_O$. Since identifiability is invariant under the addition of input/output nodes, we conclude that $(G_S;V_I;V_O)$ is identifiable.
\end{proof}

As a particular case of Theorem \ref{mainresult}, we find that $(G;V_I;V_O)$ is identifiable if $D(W) = V$, that is, if $W$ is a zero forcing set in the graph $G$. This is the topic of the following theorem.  

\begin{theorem}
	\label{mainresult2}
	Let $G = (V,E)$ be an undirected graph and let $V_I, V_O \subseteq V$. If $V_I \cap V_O$ is a zero forcing set in $G$ then $(G;V_I;V_O)$ is identifiable.
\end{theorem}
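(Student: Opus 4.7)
The plan is to deduce Theorem \ref{mainresult2} as an immediate specialization of Theorem \ref{mainresult}. First I would set $W := V_I \cap V_O$ and appeal to the hypothesis that $W$ is a zero forcing set in $G$. By the definition recalled in Section \ref{sectionzeroforcing}, this means that starting from the black set $W$ and exhaustively applying the color-change rule yields $D(W) = V$.

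Next I would take the induced subgraph in Theorem \ref{mainresult} to be $G_S := G$, so that its vertex set $V_S$ equals $V$. The inclusion $V_S \subseteq D(W)$ then reduces to $V \subseteq V$, which is trivial. All hypotheses of Theorem \ref{mainresult} are therefore met, and its conclusion gives identifiability of $(G;V_I;V_O)$, which is exactly the desired statement.

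I do not expect any real obstacle in the argument itself: the technical content is already carried by Lemma \ref{lemmainvariant}, which handles the propagation of identifiability across a single force $u \to v$, and by Theorem \ref{mainresult}, which iterates this lemma along a forcing sequence and then enlarges $W$ back to $V_I$ on the input side and to $V_O$ on the output side using the (trivial) monotonicity of identifiability under addition of input and output nodes. Theorem \ref{mainresult2} is the special case in which the derived set saturates, so the proof really consists of unwinding definitions and quoting the previous theorem.
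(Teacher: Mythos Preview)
Your proposal is correct and matches the paper's own treatment: the paper introduces Theorem \ref{mainresult2} precisely as the particular case of Theorem \ref{mainresult} obtained when $W = V_I \cap V_O$ is a zero forcing set, so that $D(W) = V$ and one may take $G_S = G$. No separate argument is given in the paper beyond this observation, so your write-up is, if anything, slightly more explicit than the original.
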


\begin{remark}
For a graph $G = (V,E)$, checking whether a given subset is a zero forcing set in $G$ can be done in time complexity $\mathcal{O}(n^2)$, where $n = |V|$ (cf. \cite{Trefois2015}). Consequently, checking the condition of Theorem \ref{mainresult2} is still feasible for large-scale graphs. Although the focus of this paper is on the \emph{analysis} of identifiability, we remark that Theorem \ref{mainresult2} can also be used in the \emph{design} of sets $V_I$ and $V_O$ that ensure identifiability of $(G;V_I;V_O)$. Specifically, input and output sets with small cardinality are obtained by choosing $V_I = V_O$ as a \emph{minimum zero forcing set} in $G$. Minimum zero forcing sets are known for several types of graphs including path, cycle, and complete graphs, and for the class of tree graphs (see Section IV-B of \cite{Monshizadeh2014}). It was shown that finding a minimum zero forcing set in general graphs is NP-hard \cite{Aazami2008}. However, there also exist heuristic algorithms for finding (minimum) zero forcing sets. For instance, it can be shown that for any graph $G$, it is possible to find a zero forcing set of cardinality $n - \diam(G)$, where $\diam(G)$ denotes the diameter of $G$.
\end{remark}

\begin{remark}
\label{remarkdirected}
It is interesting to remark that Theorem \ref{mainresult2} implies that for many networks it is sufficient to excite and measure only a fraction of nodes (see, for instance, Example \ref{example1}). This is in contrast with the case of identifiability of dynamical networks with unknown graph structure, for which it was shown that all nodes need to be measured \cite{Pare2013}. Apart from the fact that we assume that the graph $G$ is known, the rather mild conditions of Theorem \ref{mainresult2} are also due to the fact that we consider \emph{undirected} graphs with state matrices that satisfy \emph{sign constraints}. In fact, in the case of \emph{directed} graphs it can be shown that the condition $V_I \cup V_O = V$ is \emph{necessary} for identifiability, i.e., each node of the graph needs to be an input or output node (or both). To see this, let $G_d$ be a directed graph, and define $\mathcal{Q}_p(G_d)$ analogous to the definition for undirected graphs (Section \ref{sectiongraphtheory}), with the distinction that $X \in \mathcal{Q}_p(G_d)$ is not necessarily symmetric. Assume that $V_I \cup V_O \neq V$. We partition $X \in \mathcal{Q}_p(G_d)$, and pick a nonsingular $S \in \mathbb{R}^{n \times n}$ as
\begin{equation*}
X = \begin{bmatrix}
X_{11} & X_{12} \\ X_{21} & X_{22}
\end{bmatrix}, \quad S = \begin{bmatrix}
I & 0 \\ 0 & \epsilon I
\end{bmatrix},
\end{equation*}
where the row block $\begin{bmatrix}
X_{21} & X_{22}
\end{bmatrix}$ corresponds to the nodes in $V \setminus (V_I \cup V_O)$. The partition of $S$ is compatible with the one of $X$, and $\epsilon$ is a positive real number, not equal to $1$. If $X_{12}$ and $X_{21}$ are not both zero matrices, then $\bar{X} := S^{-1} X S$ is contained in $\mathcal{Q}_p(G_d)$ and $\bar{X} \neq X$, but $(X,M,N)$ and $(\bar{X},M,N)$ have the same Markov parameters. That is, $(G;V_I;V_O)$ is not identifiable. If both $X_{12}$ and $X_{21}$ are zero, then the Markov parameters of $(X,M,N)$ are independent of $X_{22}$, hence, $(G;V_I;V_O)$ is also not identifiable. Therefore, for directed graphs the condition $V_I \cup V_O = V$ is necessary for identifiability. The above discussion also implies that $V_I \cup V_O = V$ is necessary for identifiability of \emph{undirected graphs} for which $X \in \mathcal{Q}(G)$ (i.e., for which $X$ does not necessarily satisfy the sign constraints). Indeed, this can be shown by the same arguments as above, using $\epsilon = -1$. We conclude that the conditions for identifiability become much more restrictive once we remove either the assumption on sign constraints or the assumption that the graph is undirected.
\end{remark}
 
\begin{example}
\label{example1}
In this example, we illustrate Theorem \ref{mainresult2}. Consider the tree graph $G = (V,E)$ of Figure \ref{fig:treegraph}.
\begin{figure}[h!]
	\centering
	\includegraphics[width=\linewidth]{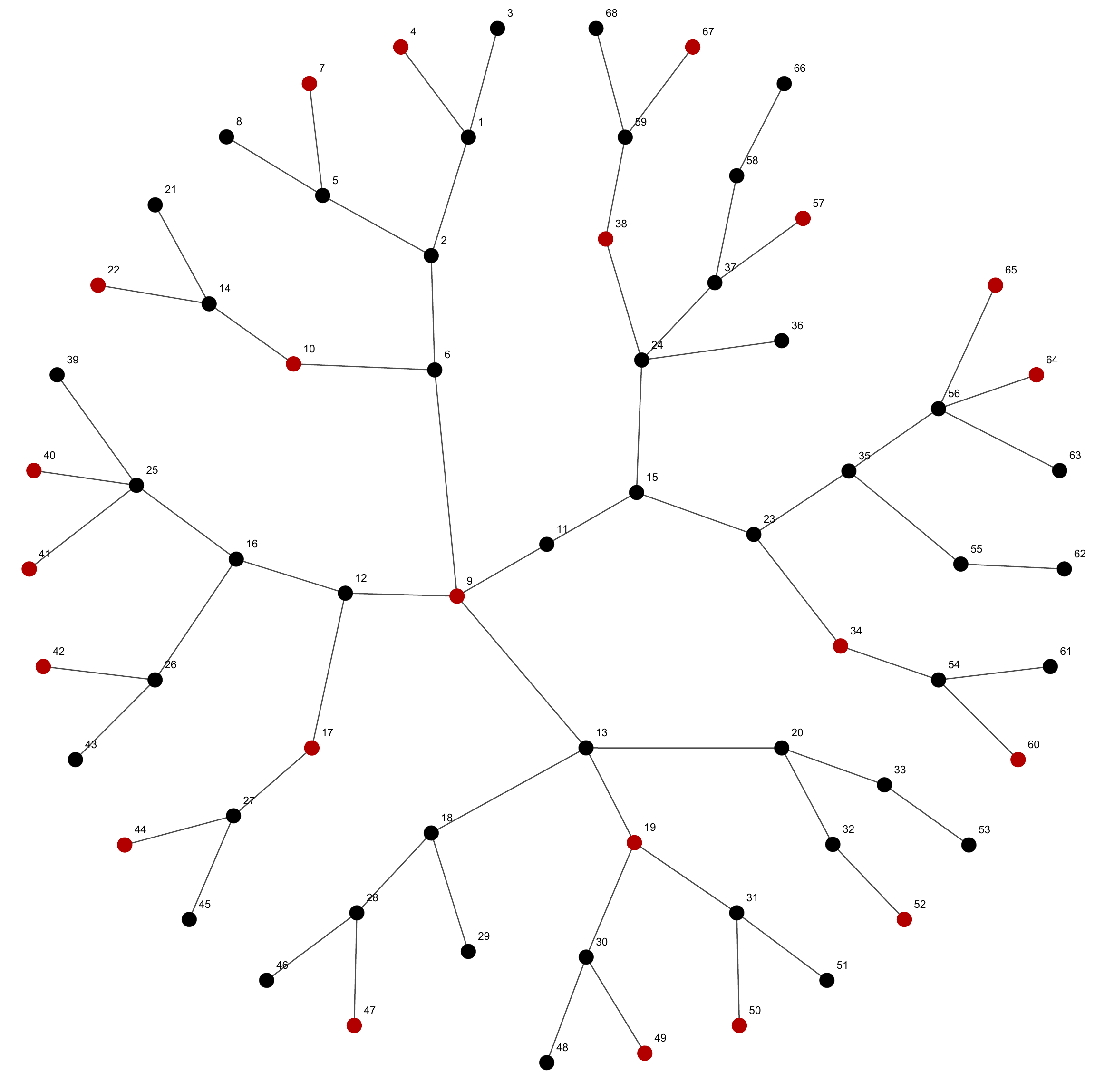}
	\caption{Tree graph $G$ with input and output set $V_I = V_O = \{4,7,9,10,\break17,19,22,34,38,40,41,42,44,47,49,50,52,57,60,64,65,67\}$.}
	\label{fig:treegraph}
\end{figure}
The input set $V_I$ and output set $V_O$ have been designed in such a way that $V_I = V_O$ is a minimum zero forcing set in $G$. In fact, the nodes of $V_I$ have been chosen as initial nodes of paths in a so-called \emph{minimal path cover} of $G$ \cite{Monshizadeh2014}, and therefore, $V_I$ is a minimum zero forcing set in $G$ by Proposition IV.12 of \cite{Monshizadeh2014}. This implies that also $V_I \cap V_O$ is a zero forcing set, and therefore, by Theorem \ref{mainresult2} we conclude that $(G;V_I;V_O)$ is identifiable.
\end{example}

\begin{example}
It is important to note that the condition in Theorem \ref{mainresult2} is not necessary for identifiability. This is shown in the following example. Consider a graph $G = (V,E)$, where $V = \{1,2,3\}$, and $E = \{(1,2),(2,1),(2,3),(3,2)\}$, and let $V_I = \{2\}$ and $V_O = V$. A straightforward calculation shows that any matrix $X \in \mathcal{Q}_p(G)$ can be identified from the Markov parameters $NXM$ and $N X^2 M$. This shows that $(G;V_I;V_O)$ is identifiable. However, note that $V_I \cap V_O = \{2\}$ is not a zero forcing set in $G$.
\end{example}

\section{Identifiability for higher-order dynamics}
\label{sectionextension}
The purpose of this section is to generalize the results of Section \ref{sectionmainresult} to the case of higher-order node dynamics. Specifically, suppose that node $i \in V$ has the associated dynamics
\begin{equation*}
\dot{x}_i(t) = \begin{cases}
A x_i(t) + B u_i(t) + E z_i(t) & \text{if } i \in V_I \\
A x_i(t) + E z_i(t) & \text{otherwise}
\end{cases},
\end{equation*}
where $x_i \in \mathbb{R}^q$ is the state of node $i$, $u_i \in \mathbb{R}^r$ is the input (only applied to nodes in $V_I$), and $z_i \in \mathbb{R}^s$ describes the coupling between the nodes. The real matrices $A,B,$ and $E$ are of appropriate dimensions. In addition to the above dynamics, we associate with each node $i \in V_O$ the output equation
\begin{equation*}
y_i(t) = C x_i(t),
\end{equation*} 
where $y_i \in \mathbb{R}^t$, and $C \in \mathbb{R}^{t \times q}$. The coupling variable $z_i$ is chosen as 
\begin{equation*}
z_i(t) = \sum_{j = 1}^n X_{ij} K x_j(t),
\end{equation*}
where $K \in \mathbb{R}^{s \times q}$, $X_{ii} \in \mathbb{R}$, $X_{ij} = X_{ji}$, and for $i \neq j$, $X_{ij} \geq 0$ and $X_{ij} > 0$ if and only if $(i,j) \in E$. We define $x := \col(x_1,x_2,\dots,x_n)$, $u := \col(u_{i_1},u_{i_2},\dots,u_{i_m})$, and $y := \col(y_{j_1},y_{j_2},\dots,y_{j_p})$, where $i_k \in V_I$ and $j_l \in V_O$ for all $k = 1,2,\dots,m$ and $l = 1,2,\dots,p$. Then, the dynamics of the entire network is described by the system
\begin{subequations}
\label{system2}
\begin{align}
\dot{x}(t) &= (I \otimes A + X \otimes EK) x(t) + (M \otimes B) u(t) \\
y(t) &= (N \otimes C) x(t),
\end{align}
\end{subequations} 
where the $(i,j)$-th entry of the matrix $X \in \mathcal{Q}_p(G)$ is equal to $X_{ij}$, and the matrices $M$ and $N$ are defined in \eqref{definitionMN}. Dynamics of the form \eqref{system2} arise, for example, when synchronizing networks of linear oscillators \cite{Scardovi2009}. In what follows, we use the notation $X_e := I \otimes A + X \otimes EK$, $M_e := M \otimes B$, and $N_e := N \otimes C$. 

We assume that the matrices $A,B,C,E$ and $K$ are known, and we are interested in conditions under which we can identify an induced subgraph $G_S$ of $G$. To make this precise, we say $(G_S;V_I;V_O)$ is \emph{identifiable with respect to \eqref{system2}} if for all $X, \bar{X} \in \mathcal{Q}_p(G)$ the following implication holds: 
\begin{equation*}
N_e (sI - X_e)^{-1} M_e = N_e (sI - \bar{X}_e)^{-1} M_e \implies X_S = \bar{X}_S,
\end{equation*}
where $X_e := I \otimes A + X \otimes EK$, $\bar{X}_e := I \otimes A + \bar{X} \otimes EK$, and the matrices $X_S, \bar{X}_S \in \mathcal{Q}_p(G)$ are the principal submatrices of $X$ and $\bar{X}$ corresponding to the nodes of $G_S$. The following theorem states conditions for identifiability of $(G_S;V_I;V_O)$ for the case of general network dynamics.

\begin{theorem}
	Let $G_S = (V_S,E_S)$ be an induced subgraph of the undirected graph $G = (V,E)$, and let $V_I,V_O \subseteq V$. Define $W := V_I \cap V_O$ and let $D(W)$ be the derived set of $W$ in $G$. Then $(G_S;V_I;V_O)$ is identifiable with respect to \eqref{system2} if $V_S \subseteq D(W)$ and $C (EK)^k B \neq 0$ for all $k \in \mathbb{N}$.
\end{theorem}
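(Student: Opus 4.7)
The plan is to reduce this higher-order theorem to its first-order counterpart, Theorem \ref{mainresult}, by showing that knowledge of the extended Markov parameters $N_e X_e^k M_e$ suffices to recover all of the first-order Markov parameters $NX^k M$, under the standing assumption $C(EK)^k B \neq 0$ for all $k \in \mathbb{N}$. Once this reduction is in place, Theorem \ref{mainresult} applied to the single-integrator triple $(X,M,N)$ immediately yields $X_S = \bar X_S$.

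The first step is to expand $X_e^k = (I \otimes A + X \otimes EK)^k$ via the mixed-product rule for the Kronecker product. Each term in the non-commutative binomial expansion corresponds to a word of length $k$ in the two letters $I \otimes A$ and $X \otimes EK$, and the mixed-product rule collapses such a word into a tensor $X^j \otimes W$, where $j$ is the number of occurrences of $X \otimes EK$ and $W$ is the matching product of $A$'s and $EK$'s in the same order. Grouping by $j$ gives
\begin{equation*}
X_e^k = \sum_{j=0}^{k} X^j \otimes S_{k,j},
\end{equation*}
where the coefficient matrices $S_{k,j}$ depend only on the known matrices $A$ and $EK$, and crucially $S_{k,k} = (EK)^k$. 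Sandwiching by $N_e = N \otimes C$ and $M_e = M \otimes B$ produces
\begin{equation*}
N_e X_e^k M_e = \sum_{j=0}^{k} (NX^j M) \otimes (C S_{k,j} B).
\end{equation*}

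The heart of the argument is then a routine induction on $k$ showing that $NX^k M$ is uniquely determined by $\{N_e X_e^{\ell} M_e : \ell \le k\}$ together with the a-priori-known matrices $A,B,C,E,K,M,N$. The base case $k=0$ is immediate, since $NM$ is already determined by the graph and the input/output sets (and is consistent with $N_e M_e = NM \otimes CB$ with $CB \neq 0$). For the inductive step, assuming $NX^j M$ is known for every $j<k$, one subtracts the already-known contributions from $N_e X_e^k M_e$ to isolate
\begin{equation*}
(NX^k M) \otimes (C(EK)^k B).
\end{equation*}
Because $C(EK)^k B$ is known and nonzero by hypothesis, any of its nonzero entries lets us recover $NX^k M$ by dividing the corresponding block (the elementary fact being that $P \otimes Q$ with $Q$ known and nonzero determines $P$ uniquely).

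Putting these pieces together, if $X, \bar X \in \mathcal{Q}_p(G)$ produce identical extended transfer matrices, then their extended Markov parameters agree for every $k$, and the induction forces $NX^k M = N \bar X^k M$ for all $k \in \mathbb{N}$. Since $V_S \subseteq D(W)$, Theorem \ref{mainresult} applied to $(X,M,N)$ gives $X_S = \bar X_S$, which is exactly identifiability of $(G_S;V_I;V_O)$ with respect to \eqref{system2}. The only real obstacle is the bookkeeping in the Kronecker expansion of $X_e^k$: verifying that the coefficient of $X^k$ really is $(EK)^k$ is what makes the highest-order term isolable and reveals why the hypothesis $C(EK)^k B \neq 0$ for every $k$ is exactly the right assumption.
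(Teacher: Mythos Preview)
Your proposal is correct and follows essentially the same route as the paper: expand $X_e^k$ via the mixed-product rule to isolate $(NX^kM)\otimes C(EK)^kB$ as the top-order term, use induction together with $C(EK)^kB\neq 0$ to deduce $NX^kM=N\bar X^kM$ for all $k$, and then invoke Theorem~\ref{mainresult}. The only cosmetic difference is that you phrase the induction as ``recovering $NX^kM$ from the extended Markov data'' while the paper phrases it as ``$N_eX_e^kM_e=N_e\bar X_e^kM_e$ implies $NX^kM=N\bar X^kM$'', but the underlying computation is identical.
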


\begin{proof}
Consider two matrices $X, \bar{X} \in \mathcal{Q}_p(G)$ and define $X_e := I \otimes A + X \otimes EK$ and $\bar{X}_e := I \otimes A + \bar{X} \otimes EK$. Moreover, let $M_e := M \otimes B$, and $N_e := N \otimes C$. Suppose that $N_e X_e^k M_e = N_e \bar{X}_e^k M_e$ for all $k \in \mathbb{N}$. We want to prove by induction that $N X^k M = N \bar{X}^k M$ for all $k \in \mathbb{N}$. For $k = 1$, the equation $N_e X_e M_e = N_e \bar{X}_e M_e$ implies 
\begin{equation*}
(N \otimes C)(I \otimes A + X \otimes EK - I \otimes A - \bar{X} \otimes EK)(M \otimes B) = 0,
\end{equation*}
and hence $N(X - \bar{X})M \otimes CEKB = 0$. By assumption, $CEKB \neq 0$, and therefore $N X M = N \bar{X} M$. Next, suppose that $N X^i M = N\bar{X}^i M$ for all $i = 1,\dots,k$. The aim is to prove that $N X^{k+1} M = N\bar{X}^{k+1} M$. Note that we obtain
\begin{equation*}
N_e X_e^{k+1} M_e = NX^{k+1}M \otimes C(EK)^{k+1}B + \sum_{i=0}^k NX^{i}M \otimes R_i,
\end{equation*}
where $R_i$ is a matrix that depends on $A,B,C,E$ and $K$ only. Completely analogously, an expression for $N_e \bar{X}_e^{k+1} M_e$ can be derived. By the induction hypothesis, $NX^{i}M = N\bar{X}^{i}M$ for $i = 1,\dots,k$, and therefore $N_e X_e^{k+1} M_e = N_e \bar{X}_e^{k+1} M_e$ implies
\begin{equation*}
(NX^{k+1}M - N\bar{X}^{k+1}M) \otimes C(EK)^{k+1}B = 0.
\end{equation*}
Since $C(EK)^{k+1}B \neq 0$, we find $NX^{k+1}M = N\bar{X}^{k+1}M$. Therefore, $N X^k M = N \bar{X}^k M$ for all $k \in \mathbb{N}$. However, since $V_S \subseteq D(W)$ we find $X = \bar{X}$ by Theorem \ref{mainresult}. Hence $(G_S;V_I;V_O)$ is identifiable with respect to \eqref{system2}.
\end{proof}

\section{Conclusions}
\label{sectionconclusion}
In this paper we have considered the problem of identifiability of undirected dynamical networks. Specifically, we have assumed that the graph structure of the network is known, and we were interested in graph-theoretic conditions under which (a submatrix of) the network's state matrix can be identified. To this end, we have used a graph coloring rule called zero forcing. We have shown that a principal submatrix of the state matrix can be identified if the intersection of input and output nodes can color all nodes corresponding to the rows and columns of the submatrix. In particular, the entire state matrix can be identified if the intersection of input and output nodes constitutes a so-called zero forcing set in the graph. Checking whether a given set of nodes is a zero forcing set can be done in $\mathcal{O}(n^2)$, where $n$ is the number of nodes in the network \cite{Trefois2015}. We emphasize that the results we have presented here only treat the \emph{identifiability} of dynamical networks, and we have not discussed any network reconstruction algorithms, like in \cite{Shahrampour2015}, \cite{Materassi2012}, \cite{Hassan-Moghaddam2016}, \cite{vanWaarde2018}. However, if the conditions of Theorem \ref{mainresult2} are satisfied, then the state matrix of the network can be identified using any suitable method from system identification \cite{Ljung1999}.

\bibliographystyle{IEEEtran}
\bibliography{MyRef}

\end{document}